\newtheorem{thm}{Theorem}[section]
\newtheorem{lem}[thm]{Lemma}
\newtheorem{que}[thm]{Question}
\newtheorem{defn}[thm]{Definition}
\theoremstyle{remark}
\def \N {\mathbb N}
\def \G {\mathcal G}
\def \R {\mathbb R}
\def \P {\mathbb P}
\def \corr {\mathsf{corr}}
\def \sq {sequence}
\def \xt {$(X,T)$}
\def \tl {topological}
\def \ds {dynamical system}
\def \mmu {\mu}
\def \corr {\mathsf{corr}}
\numberwithin{equation}{section}
\title{Almost full entropy subshifts uncorrelated to the M\"obius function}
\author{Tomasz Downarowicz and Jacek Serafin}
\address{$^\dagger$ Faculty of Pure and Applied Mathematics, Wroc{\l}aw University of Science and Technology,
Wybrze\.ze Wyspia\'nskiego 27, Wroc{\l}aw 50-370, Poland, \rm downar@pwr.edu.pl, serafin@pwr.edu.pl}
\subjclass[2010]{Primary: 37B05; Secondary: 37B10, 37A35, 11Y35.}
\keywords{Correlation with a \sq, aperiodic sequence, inverse Sarnak's conjecture, positive entropy}
\thanks{The research is supported by the NCN (National Science Center, Poland) grant 2013/08/A/ST1/00275.}
\begin{document}
\begin{abstract}
We show that if $y=(y_n)_{n\ge 1}$ is a bounded sequence with zero average along every infinite arithmetic progression then for every $N\ge 2$ there exist (unilateral or bilateral) subshifts $\Sigma$ over $N$ symbols, with entropy arbitrarily close to $\log N$, uncorrelated to $y$. In particular, for $y=\mmu$ being the M\"obius function, we get that there exist subshifts as above which satisfy the assertion of Sarnak's conjecture. The existence of positive entropy systems uncorrelated to the M\"obius function is claimed in Sarnak's survey \cite{sarnak} (and attributed to Bourgain), however, to our knowledge no examples have ever been published. We fill in this gap and by the way we show that this has nothing to do with more advanced algebraic properties (for instance multiplicativity) of the considered \sq.
\end{abstract}
\maketitle

\numberwithin{equation}{section}

\section{Introduction}
Let $y$ be a bounded, real-valued \sq\ with zero average along every infinite arithmetic progression, i.e., satisfying, for every $t\ge 1$ and $l\ge 0$, the condition 
\begin{equation}\label{arp}
\lim_n \frac1n\sum_{i=1}^n y_{it+l} = 0.
\end{equation}
Following the terminology used for multiplicative functions (see e.g. \cite{FH}), we will call any such \sq\ \emph{aperiodic}. Without loss of generality we will assume that $|y_n|\le 1$ for all $n$. For example, we can take the M\"obius function $y=\mmu$, where
$$
\mmu_n=\begin{cases}
\phantom{-}1&\text{for $n=1$,}\\
\phantom{-}(-1)^r&\text{if $n$ is a product of $r$ distinct primes,}\\
\phantom{-}0& \text{otherwise (i.e., if $n$ has a repeated prime factor).}
\end{cases}
$$
It is known that this \sq\ satisfies the condition \eqref{arp} (see e.g. \cite{sarnak}). 

Once an aperiodic \sq\ $y$ is fixed, we consider \tl\ \ds s \xt\ where $X$ is a compact metric space and $T:X\to X$ is a continuous transformation. Subshifts (in which the transformation is always the left shift) will be denoted using just one letter $\Sigma$. Uncorrelation between a system and a \sq\ will be understood as follows:

\begin{defn}\label{def1}
We say that \xt\ is \emph{uncorrelated} to $y$ if for each continuous function $f:X\to\R$ and every $x\in X$, we have 
$$
\lim_n\frac 1n\sum_{i=1}^n f(T^ix)y_i = 0.
$$
\end{defn}

The celebrated Sarnak's conjecture (\cite{sarnak}) asserts that any system with zero \tl\ entropy is uncorrelated to the  M\"obius function. Most of the activity around this conjecture is aimed toward determining ever larger classes of zero entropy systems which obey the Sarnak's uncorrelation condition 
(for a long list of references see the survey \cite{AKLR}, newer results are in \cite{HWZ}). Much less (but not zero) effort is devoted to finding systems which correlate with $\mmu$ 
(see e.g. \cite{AKL}, \cite{DK}, \cite{K}). Clearly, all examples found so far have positive entropy. 
For these efforts to be meaningful, it becomes crucial to also consider the ``inverse'' of Sarnak's 
problem i.e., the following question
\begin{que}
Are there positive entropy systems uncorrelated to $\mmu$? 
\end{que}
\noindent As a matter of fact, in Sarnak's exposition \cite{sarnak} it is claimed that such systems do exist, and relevant example is attributed to Bourgain. However, no examples have ever been published and we failed to acquire any details, so we decided to consider the question as open.
 
Notice that if the answer to the above question was negative and if Sarnak's conjecture held, one could view $\mmu$ as a \sq\ able to precisely differentiate between positive and zero \tl\ entropy systems. As we will show, it is not the case.

In this work we answer the above question in the positive, providing evidence for the claim in Sarnak's survey. In fact we show a bit more: if $y$ is any aperiodic \sq\ as described at the beginning of this section, and $N\ge 2$ is an arbitrary integer, then there exist subshifts $\Sigma$ on $N$ symbols, with entropy arbitrarily close to $\log N$, uncorrelated to $y$. The proof relies on a complicated counting blocks argument.
\medskip

As a byproduct\footnote{We are sure that this remark has a more direct proof.} we can make the following remark. Weiss \cite[Theorem 8.3]{W} proved that any subshift $\Sigma$ on $N$ symbols, of entropy larger than $\log(N-1)$, has a positive density independence set $A$ (i.e., a subset of $\mathbb N$ along which all combinations of symbols occur), with a lower bound on the density $\mathsf{dens}(A)$ in $\mathbb N$ depending (obviously nondecreasingly) on the entropy $h(\Sigma)$. Since $h(\Sigma) = \log N$ holds only for the full shift (whose independence set is the whole $\mathbb N$), one might expect, that as $h(\Sigma)$ tends to $\log N$, these lower bounds tend to $1$. Our examples show that the limit of these bounds cannot exceed $\frac12$. Indeed, take any aperiodic sequence $y$ over $\{-1,1\}$ (in fact almost every \sq\ is such, for the $(\frac12,\frac12)$-Bernoulli measure). In our example created for $y$, with $\Lambda=\{1,2,\dots,N\}$ and entropy arbitrarily close to $\log N$, consider the function $f(x)=(-1)^{x_0}$. There are points $x$ such that $f(x)$ matches $y$ along the independence set $A$. Then $(f(T^nx))_{n\ge 1}$ correlates with $y$ by at least $2\,\mathsf{dens}(A)-1$, so this number cannot be positive.

\section{Preliminaries}
Let $B=(b_1,\dots,b_n)\in \R^n$ be a finite \sq\ (block) of real numbers. We define its \emph{average} as
$$
\overline B =\frac1n\sum_{i=1}^nb_i.
$$
If $C=(c_1,\dots,c_n)$ is another block (of the same length) we define
$$
\corr(B,C) = |\overline{BC}|,\footnote{Formally, the correlation should be the distance between the average of the product and the product of the averages. But since our aperiodic \sq\ has zero average, we can skip the latter term.}
$$
where $BC = (b_1c_1,\dots,b_nc_n)$. Further, if $x=(x_i)_{i\geq 1}$ and $y=(y_i)_{i\geq 1}$ are bounded sequences, the correlation between $y$ and $x$ is defined as 
$$
\corr(x, y)=\limsup_{n\to\infty} \corr(x_1^n, y_1^n),
$$
where $z_m^n$ stands for the block $(z_m,\dots,z_n)$ ($m\le n$).
\medskip

An elementary lemma concerns \sq s with zero average:

\begin{lem}\label{proportion}
Let $(y_n)_{n\ge 1}$ be a bounded \sq\ with zero average. Then, for every $\epsilon\in(0,1)$ and every natural $m$ there exists a natural $L(\epsilon,m)$ such that for every $L\ge L(\epsilon,m)$ the absolute value of the average of $y$, over any interval $I\subset[1,mL]$ of length at least $L$, is less than $\epsilon$.
\end{lem}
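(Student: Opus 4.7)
My plan is to reduce the statement to a uniform bound on the partial sums $S_n := \sum_{i=1}^{n} y_i$ by writing the sum over an interval $I=[a,b]$ as the telescoping difference $S_b - S_{a-1}$ (with $S_0:=0$). Since any $I \subset [1,mL]$ has endpoints in $[0, mL]$, controlling $S_n$ uniformly over $n \le mL$ will immediately control the sum over $I$, after which dividing by $|I|\ge L$ yields the desired bound on the average.

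The key observation is that the zero--average hypothesis is just the assertion $S_n/n \to 0$. Given $\epsilon$ and $m$, I would fix $\eta := \epsilon/(4m)$ and choose a threshold $N_0 = N_0(\eta)$ such that $|S_n| \le \eta\, n$ for all $n \ge N_0$. For $n < N_0$ the trivial estimate using the boundedness of $y$ gives $|S_n| \le N_0\,\sup_k|y_k| =: K_0$, a constant depending on $\eta$ (hence on $\epsilon$ and $m$) and on the sup-norm of $y$. Thus universally
$$
|S_n| \,\le\, \eta\, n + K_0 \qquad (n\ge 0).
$$

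Substituting back, for every $I=[a,b]\subset[1,mL]$ with $|I|\ge L$ we get
$$
\left|\sum_{i\in I} y_i\right| \;\le\; |S_b| + |S_{a-1}| \;\le\; 2\eta\cdot mL + 2K_0,
$$
and dividing by $|I|\ge L$ yields $2\eta m + 2K_0/L = \epsilon/2 + 2K_0/L$. Setting $L(\epsilon,m) := \max\bigl\{N_0,\,\lceil 4K_0/\epsilon\rceil\bigr\}$ makes the remainder at most $\epsilon/2$, which closes the argument. I do not foresee any genuine obstacle; the only subtlety is that the parameter $m$ must be absorbed into the choice $\eta\asymp 1/m$, since the endpoints $a-1$ and $b$ may be as large as $mL$ while the length of $I$ is only guaranteed to exceed $L$.
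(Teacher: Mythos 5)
Your proof is correct and follows essentially the same route as the paper's: both express the sum over an interval $I=[a,b]$ through the prefix sums over $[1,a-1]$ and $[1,b]$ (the paper writes this as the convex-combination identity for the averages $\alpha,\beta,\gamma$) and exploit that prefix averages are eventually small, with the factor $m$ compensating for the endpoints being as large as $mL$ while $|I|$ is only guaranteed to be at least $L$. The only cosmetic difference is that the paper disposes of short prefixes by a case split using the trivial bound $|\beta|\le 1$, whereas you absorb them into the additive constant $K_0$ and then beat it by taking $L\ge 4K_0/\epsilon$ (tighten the constants slightly if you want the strict inequality $<\epsilon$).
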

\begin{proof} Without loss of generality, we can assume that $|y_n|\le 1$ for all $n$.
Now simply define $L(\epsilon,m)$ to be such that for every $n\ge\frac\epsilon2L(\epsilon,m)$
the average of $y$ over $[1,n]$ is less than $\frac\epsilon{2m}$ in absolute value. Consider an interval $I$ of some length $i\ge L\ge L(\epsilon,m)$ as in the assertion of the lemma, and denote by $J$ the interval extending from $1$ to the left end of $I$ and let $j$ be its length. Note that $\frac{i+j}i\le m$. Denote by $\alpha,\beta$ and $\gamma$ the averages of $y$ over $I$, $J$ and $I\cup J$, respectively. We have
$$
\gamma = \frac i{i+j}\alpha + \frac j{i+j}\beta,
$$
hence
$$
|\alpha|\le\frac{i+j}i|\gamma| + \frac ji|\beta|\le m|\gamma| + \frac ji|\beta|.
$$
Since $i+j\ge L(\epsilon,m)>\frac\epsilon2L(\epsilon,m)$, we have $|\gamma|<\frac\epsilon{2m}$. If $\frac ji<\frac\epsilon2$, we are done (because $|\beta|\le 1$). Otherwise $j\ge \frac\epsilon2i\ge \frac\epsilon2L(\epsilon,m)$, hence $|\beta|<\frac\epsilon{2m}$ while $\frac ji\le m$, and we are done as well.

\end{proof}

We will need a subtle version of Hoeffding's inequality \cite[Theorem~3]{H63}, which we formulate in the form that suits us best:

\begin{thm}
Let $\mathsf X_1, \mathsf X_2, \ldots, \mathsf X_m$ be independent (not necessarily identically distributed) random variables, each taking values in the interval $[-1,1]$ and with a common bound
$\mathbf v$ on the variance. Then for every $\epsilon>0$ the following inequality holds:
$$
\P\{\overline{\mathsf X}-E\overline{\mathsf X}\geq \epsilon\}\leq \left[(1+\tfrac{2\epsilon}{\mathbf v})^{\frac{\mathbf v+2\epsilon}{\mathbf v+4}}\cdot(1-\tfrac\epsilon2)^{(1-\frac\epsilon2)\frac4{\mathbf v+4}}\right]^{-m},
$$
where $\overline{\mathsf X}=\frac1m\sum_{i=1}^m\mathsf X_i$ and $E\overline{\mathsf X}$ stands for the expectation of  $\overline{\mathsf X}$. 
\end{thm}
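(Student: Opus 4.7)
The plan is to apply the standard Chernoff--Cram\'er exponential-moment method, refined to exploit the variance bound. Concretely, I would first center the summands by setting $\tilde{\mathsf X}_i := \mathsf X_i - E\mathsf X_i$; since both $\mathsf X_i$ and $E\mathsf X_i$ lie in $[-1,1]$, each $\tilde{\mathsf X}_i$ takes values in $[-2,2]$ with mean zero and variance at most $\mathbf v$. For any $t>0$, independence together with Markov's inequality applied to $e^{tm(\overline{\mathsf X}-E\overline{\mathsf X})}$ yields
$$
\P\{\overline{\mathsf X}-E\overline{\mathsf X}\geq\epsilon\}\ \leq\ e^{-mt\epsilon}\prod_{i=1}^{m} E\bigl[e^{t\tilde{\mathsf X}_i}\bigr].
$$

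The crux of the argument, which is the substance of Hoeffding's Theorem~3 in \cite{H63}, will be to bound each moment generating function $E[e^{t\tilde{\mathsf X}_i}]$ \emph{uniformly} over all probability laws on $[-2,2]$ with mean $0$ and variance $\leq \mathbf v$. Since $z\mapsto e^{tz}$ is convex, a standard extremal-measure argument confines the maximizer to two-point distributions; for $t>0$ one further wants mass concentrated near the upper endpoint $+2$, and the two moment constraints then force the negative atom to sit at $-\mathbf v/2$ with weight $\tfrac{4}{\mathbf v+4}$ (the atom at $+2$ carrying the remaining weight $\tfrac{\mathbf v}{\mathbf v+4}$). This produces
$$
E\bigl[e^{t\tilde{\mathsf X}_i}\bigr]\ \leq\ \tfrac{4}{\mathbf v+4}\,e^{-\mathbf v t/2}+\tfrac{\mathbf v}{\mathbf v+4}\,e^{2t}\ =:\ M(t).
$$

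Finally I would optimize $e^{-t\epsilon}M(t)$ over $t>0$. Differentiating its logarithm and setting the derivative to zero leads, after mild algebra, to the critical point $e^{t^{\ast}(\mathbf v+4)/2}=\tfrac{2(\mathbf v+2\epsilon)}{\mathbf v(2-\epsilon)}$; plugging this back and collecting exponents (noting that $\tfrac{\mathbf v+2\epsilon}{\mathbf v+4}+\tfrac{2(2-\epsilon)}{\mathbf v+4}=1$ and $\tfrac{2(2-\epsilon)}{\mathbf v+4}=(1-\tfrac\epsilon2)\cdot\tfrac{4}{\mathbf v+4}$) reproduces exactly the reciprocal of the bracketed expression in the statement, and the product over $i=1,\dots,m$ then raises it to the $m$-th power. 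The main obstacle is the extremization step identifying the worst-case two-point law; once that is secured, the final optimization is a routine if mildly intricate computation.
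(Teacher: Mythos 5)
Your argument is correct, but note that the paper never proves this statement itself: it is quoted (in a rescaled form) from Hoeffding \cite[Theorem 3]{H63}, i.e.\ the variance-sensitive Bennett--Hoeffding bound with upper range $b=2$ for $\mathsf X_i-E\mathsf X_i$ and common variance bound $\mathbf v$. What you propose is essentially a reconstruction of Hoeffding's original argument: Chernoff's exponential-moment bound, the worst-case law for the moment generating function, and exact optimization in $t$. I checked the computational part: the extremal two-point law at $\{-\mathbf v/2,\,2\}$ with weights $\tfrac{4}{\mathbf v+4},\tfrac{\mathbf v}{\mathbf v+4}$ indeed has mean $0$ and variance $\mathbf v$, your critical point $e^{t^{\ast}(\mathbf v+4)/2}=\tfrac{2(\mathbf v+2\epsilon)}{\mathbf v(2-\epsilon)}$ solves $M'(t)=\epsilon M(t)$, and $e^{-t^{\ast}\epsilon}M(t^{\ast})$ does collapse to $(1+\tfrac{2\epsilon}{\mathbf v})^{-\frac{\mathbf v+2\epsilon}{\mathbf v+4}}(1-\tfrac\epsilon2)^{-(1-\frac\epsilon2)\frac{4}{\mathbf v+4}}$, so the product over $i$ gives exactly the stated bound. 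The one step you should not wave at is the extremal-law claim: a bare extreme-point count for the constraint set (normalization, mean, variance) only limits supports to three atoms, so ``two-point'' is not automatic from convexity alone. The clean fix --- and Hoeffding's actual device --- is a quadratic majorant: exhibit $q(z)=a_0+a_1z+a_2z^2$ with $a_2\ge0$ satisfying $q(z)\ge e^{tz}$ for all $z\le 2$, with tangency at $z=-\mathbf v/2$ and equality at $z=2$; then $E\,e^{t\tilde{\mathsf X}_i}\le a_0+a_2\,\mathrm{Var}(\mathsf X_i)\le a_0+a_2\mathbf v=M(t)$, which also handles the inequality constraint $\mathrm{Var}\le\mathbf v$ (your phrasing tacitly assumes the variance constraint is attained). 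Finally, your optimization implicitly needs $\epsilon<2$ (otherwise $t^{\ast}$ does not exist and $1-\tfrac\epsilon2\le0$); this is harmless, since for $\epsilon\ge2$ the left-hand side vanishes and the paper itself only uses the inequality with $\epsilon<2$.
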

Of course, in order to obtain a more convenient upper bound on the above probability, 
we can replace the expression in square brackets (henceforth denoted by $W$) by a smaller (yet positive) one. 
First, since in our case it only makes sense to consider $\epsilon<2$, we can use the facts that $\frac{\mathbf v+2\epsilon}{\mathbf v+4}\geq \frac{\epsilon}{2}$ and that $x^x>\frac12$ for $x\in(0,1)$, and write
$$
W \ge (1+\tfrac{2\epsilon}{\mathbf v})^{\frac{\epsilon}{2}}\cdot(\tfrac12)^
{\frac4{\mathbf v+4}} = W_1.
$$
Now we can simply skip the first $1$ and the second (smaller than $1$) exponent:
$$
W_1\ge (\tfrac{2\epsilon}{\mathbf v})^{\frac{\epsilon}{2}}\cdot\tfrac12 = \tfrac12(2\epsilon)^{\frac\epsilon2}{\mathbf v}^{-\frac\epsilon2} = W_2.
$$
Finally, we note that $(2x)^{\frac x2}>\frac12$ for $x\in(0,1)$, hence
$$
W_2>\tfrac14{\mathbf v}^{-\frac\epsilon2}=W_3.
$$
Replacing $W$ by $W_3$ in Hoeffding's inequality and combining with a symmetric estimate for $-\overline{\mathsf X}$, we obtain
\begin{equation}\label{Hoeffding}
\P\{|\overline{\mathsf X}-E\overline{\mathsf X}|\geq \epsilon\}< 2\cdot4^m{\mathbf v}^{\frac\epsilon2m}.
\end{equation}

\section{The main result}

\begin{thm}\label{thm}
Let $y$ be an aperiodic sequence and $N\ge 2$ be a fixed integer. 
There exists a subshift $\Sigma$ over $N$ symbols of entropy arbitrarily close to $\log N$, uncorrelated to $y$.
\end{thm}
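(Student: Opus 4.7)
\emph{Plan.} I will build $\Sigma$ by a hierarchical probabilistic block selection. At each length scale $L_k\to\infty$, an \emph{admissible} family $\mathcal B_k\subset\{1,\ldots,N\}^{L_k}$ of cardinality $\geq N^{L_k(1-\epsilon_k)}$ (so the entropy is $\log N-O(\epsilon_k)$) will be carved out so that every $B\in\mathcal B_k$ has correlation at most $\delta_k$ with each window of $y$ at which the block can be embedded in $\Sigma$, with $\epsilon_k,\delta_k\to 0$. The subshift will then be defined as the set of sequences obtainable as limits of hierarchical concatenations of admissible blocks. By density of cylinder functions in $C(\Sigma)$, an expansion of cylinder functions into characters/indicators (whose constant contributions vanish thanks to \eqref{arp}), and the fact that adding a constant to any test function is absorbed by $\frac1n\sum y_i\to 0$, it suffices to show, for every $x\in\Sigma$ and every $\phi:\{1,\ldots,N\}\to[-1,1]$ with $\bar\phi:=\tfrac1N\sum_j\phi(j)=0$, that
$$
\lim_n\tfrac1n\sum_{i=1}^n\phi(x_i)y_i=0.
$$

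\emph{Hoeffding core.} The selection of $\mathcal B_k$ rests on the sharpened Hoeffding bound \eqref{Hoeffding} combined with a sub-block averaging trick. Writing $L=L_k=m\ell$ and sampling $B\in\{1,\ldots,N\}^L$ uniformly, the variables
$$
X_j=\tfrac1\ell\sum_{i=(j-1)\ell+1}^{j\ell}\phi(B_i)\,y_{a+i},\qquad j=1,\ldots,m,
$$
are independent, take values in $[-1,1]$, have expectation zero (since $\bar\phi=0$), and variance at most $\mathbf v=1/\ell$. Inequality \eqref{Hoeffding} then gives
$$
\P\bigl\{|\corr(\phi(B),y_{a+1}^{a+L})|\geq\delta_k\bigr\}<2\cdot 4^m\,\ell^{-\delta_k m/2},
$$
which is exponentially small in $m$ as soon as $\ell>4^{2/\delta_k}$. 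Union-bounding over a finite $\delta_k$-net of test functions $\phi$ and over the positions $a$ at which a level-$k$ block can be embedded (whose number is controlled by the choice of $L_{k+1}$) leaves a $(1-\epsilon_k)$-fraction of admissible blocks.

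\emph{Assembly and main obstacle.} Level-$(k+1)$ admissible blocks are assembled by concatenating level-$k$ admissible blocks at the appropriate sub-positions; the level-$(k+1)$ correlation bound is inherited from level-$k$ bounds up to an $O(L_k/L_{k+1})$ boundary loss that vanishes as $L_k/L_{k+1}\to 0$. For any $x\in\Sigma$, parsing $x_1^n$ into level-$k$ admissible sub-blocks yields $\bigl|\tfrac1n\sum\phi(x_i)y_i\bigr|\leq\delta_k+o_n(1)$, and sending $k\to\infty$ produces the required uncorrelation. The central difficulty is to achieve this uniformly over \emph{every} $x\in\Sigma$ rather than for a generic point: it forces the admissibility predicate to hold simultaneously for every position at which a level-$k$ block can sit inside a level-$(k+1)$ block, which is precisely why Lemma~\ref{proportion} is needed (to keep the Hoeffding expectation uniformly negligible across all such positions). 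The combinatorial crux is a delicate balancing of the parameter hierarchy $L_k,\ell_k,m_k,\delta_k,\epsilon_k$ so that every union bound closes while the total entropy loss $\sum_k\epsilon_k\log N$ remains below a prescribed $\eta>0$.
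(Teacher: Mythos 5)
Your overall strategy (hierarchical concatenation of ``admissible'' blocks selected by the sharpened Hoeffding bound \eqref{Hoeffding} plus union bounds, with a summable entropy loss) is the same as the paper's, but the sketch has genuine gaps at exactly the points where the real work lies. First, the reduction of the test-function class is invalid: functions of the form $x\mapsto\phi(x_1)$ with $\bar\phi=0$, together with constants, do not span a dense subspace of $C(\Sigma)$ --- they do not separate points, and the indicator of a cylinder such as $\{x:\ x_1=a,\ x_2=b\}$ is not a linear combination of single-coordinate functions. One must handle $\{-1,1\}$-valued functions depending on an arbitrary finite number of coordinates (the paper's ``codes''), and then your key simplification $E X_j=0$ is no longer available. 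Second, and more fundamentally, the expectation-zero claim is only valid at the base level, where $B$ is uniform over all of $\{1,\dots,N\}^{L}$. At every later level the blocks are drawn uniformly from the already-conditioned family $\mathcal B_k$, whose per-position symbol and word statistics are biased, so the expectation of the correlation with a window of $y$ is a nonzero, position-periodic quantity. Controlling it is precisely where the aperiodicity of $y$ along arithmetic progressions (Lemma~\ref{proportion}, applied with common difference equal to a reference block length) must enter, together with bookkeeping of how much each conditioning step can shift expectations (by $2(1-\gamma_s)$ per step, summable only because the admissible fractions $\gamma_s$ are forced extremely close to $1$) and inflate variances (a factor $4$ per conditioning step, which is why the paper recomputes the variance down the whole hierarchy and needs the initial multiplier at least $81$). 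Your phrase ``inherited from level-$k$ bounds up to an $O(L_k/L_{k+1})$ boundary loss'' skips exactly this expectation and variance propagation, which is the technical heart of the proof.

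A further gap lies in the final limit. Admissibility at level $k$ can only control correlations with finitely many windows of $y$ (you yourself bound the union over positions via the choice of $L_{k+1}$), so the inequality $\bigl|\tfrac1n\sum_{i\le n}\phi(x_i)y_i\bigr|\le\delta_k+o_n(1)$ for \emph{fixed} $k$ and $n\to\infty$ is not justified: the level-$k$ blocks occurring in $x_1^n$ sit against windows of $y$ far beyond the range your admissibility predicate controls. One must couple $k$ to $n$ --- the paper requires (R) for all windows ending before position $m^2N_k$ and then, given $n$, takes the smallest $k$ with $n<m^2N_k$ --- and this coupling is what forces the large number of windows in the union bound and hence the strength of the probabilistic estimates. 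None of these gaps is fatal to the strategy, which is indeed the paper's, but as written the proposal omits the mechanisms (expectation control on conditioned measures via aperiodic averages, variance propagation through the hierarchy, and the $k$-to-$n$ coupling) that make the argument close.
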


\begin{proof}
We need to guarantee uncorrelation to $y$ of any \sq\ obtained using a function $f\in C(\Sigma)$ (and a starting point $x$). Since uncorrelation to $y$ is preserved under linear combinations and uniform limits of \sq s, it suffices to consider functions $f$ from a family linearly dense in $C(\Sigma)$. We can choose in this role the family consisting of $\{-1,1\}$-valued functions depending on finitely many coordinates. Indeed, the collection of linear combinations of such functions is an algebra which contains constants and separates points of $\Sigma$, and thus the Stone--Weierstrass Theorem applies. Further, even for bilateral subshifts, it suffices to consider functions which depend on finitely many \emph{nonnegative} coordinates. Indeed, if a function depends also on some negative coordinates, composing it with an appropriate iterate of the shift we obtain a function depending only on nonnegative coordinates, and which yields the same set of values of the correlation with $y$.

Any $\{-1,1\}$-valued function $f$ depending on finitely many nonnegative coordinates will be called a \emph{code} and the \emph{horizon} of $f$ (denoted by $r_{\!f}$) is defined as the minimal $r\ge 1$ such that $f$ does not depend on the coordinates $r+1,r+2,\dots$ (for $f$ constant we have $r_{\!f}=1$). The name ``code'' is justified by the fact that every such $f$ determines a \emph{sliding block code} which can be applied to any block $B$ appearing in $\Sigma$ of any length $n$ larger than or equal to $r_{\!f}$, producing a block $f(B)$ over $\{-1,1\}$, of length $n-r_{\!f}+1$. The rule is 
$$
f(B)_i = f(b_i,b_{i+1},\dots,b_{i+r_{\!f}-1}).
$$
Since there are countably many such codes, we enumerate them by natural numbers (to be used later).

\medskip
Our goal is to build a special subshift $\Sigma$. This subshift will be the intersection of a nested \sq\ of subshifts $\Sigma_k$, where each $\Sigma_k$ consists of all (unilateral or bilateral, depending of the preferred type of the subshift) infinite concatenations (and their shifts) of blocks belonging to some family $\G_k\subset\Lambda^{N_{\!k}}$, where $\Lambda$ is an alphabet of cardinality $N$ (i.e., $\G_k$ is a subfamily of blocks of some common length $N_{\!k}$). It is an elementary exercise to show that the \tl\ entropy of the intersection of a nested \sq\ of subshifts equals the limit of their entropies\footnote{Attention, this is not true for more general \tl\ \ds s.}, thus
$$
h(\Sigma)=\lim_kh(\Sigma_k) = \lim_k\tfrac1{N_{\!k}}\log(\#\G_k).
$$
 
We begin the construction by setting $N_0=1$ and $\G_0=\Lambda$. Now $\Sigma_0$ is simply the full shift on $N$ symbols.
In each following step $k\ge 1$ of the construction we will refer to several parameters, for which we are about to fix the notation consistently used throughout the remainder of this paper. In the description of the inductive step $k$, in the notation of most of these parameters we will skip the subscript $k$. And so: 
\begin{itemize}
	\item The \emph{multiplier} $m=m_k$ will play the role of the ratio $\frac{N_{\!k}}{N_{\!k-1}}$; the 
	family $\G_k$ will consist of concatenations of $m$ blocks from $\G_{k-1}$. In the first step $m$ is 
	equal to some large $M\geq 81$, then it tends nondecreasingly to infinity, but very slowly (it is constant on 
	long intervals, rarely increasing by $1$). The dependence $k\mapsto m$ will be specified more precisely 	
	later. 
	\item The length $N_{\!k}$ equals the product $m_1m_2\cdots m_k$. Clearly, $M^k\le N_{\!k}\le 
	m^k$. Since in most formulas this parameter appears several times with different indices, exceptionally,
	we will never skip the subscript (besides, $N$ is already reserved to denote the cardinality of the alphabet).
\end{itemize}
All of the following parameters depend on $k$ indirectly, via the multiplier $m$.	
\begin{itemize}
	\item We fix a decreasing to zero \sq\ of parameters $\epsilon=\epsilon_m$ starting with $\epsilon_M=1$ 
	and assuming the values $\epsilon_m=\frac3m$ for $m>M$. Clearly, $\epsilon$ tends to zero with $k$, 
	but very slowly, remaining constant throughout many steps. 
	\item The number $p=m-M$ (which is always strictly less than $k$) will serve as the index of 
	some previous step (called the \emph{reference step}); we will view the elements of $\G_k$ (and also of 
	$\G_{k-1}$) as concatenations of the blocks from $\G_p$. In the initial step we have $m=M$, so that $p=0$ 
	and we imagine the elements of $\G_k$ decomposed into elements of $\G_0$ (single symbols). 
	\item We will also refer to the multiplier that was used in step number $p+1$. According to our 
	notation it is $m_{p+1}$. The number $2^{-{m_{p+1}}}$ will be denoted by $\delta$. This parameter tends to 
	zero with $k$ (but very very slowly).
	\item Our estimates in step $k$ will involve some finite collection $\mathcal F=\mathcal F_k$ of codes. 
	Two	conditions must be fulfilled to include a code $f$ in this collection: its index in the ordering of 
	all codes must not exceed $m$, and its horizon $r_{\!f}$ must not exceed $\delta N_{\!p}$. It is clear 
	that $\#\mathcal F\le m$. The numbers $\delta N_{\!p}=N_{\!p}2^{-m_{p+1}}\ge 
	N_{\!p}2^{-p-M}\ge2^{-M}(\frac M2)^p$ tend nondecreasingly to infinity, hence the collections ascend, and 
	every code will eventually be included.
\end{itemize}
\medskip

We can now define $\G_k$ more precisely: Suppose that for some $k\ge 1$ the family $\G_{k-1}\subset\Lambda^{N_{\!k-1}}$ has been established. We define $\G_k$ as the family of all concatenations $B$ of $m$ blocks from $\G_{k-1}$ which satisfy the following requirement: 
\begin{enumerate}
\item[(R)]\label{R} for every $1\le j\le (m^2-1)N_{\!k}$ and every $f\in\mathcal F$, letting $C=y_j^{j+N_{\!k}-1}$ 
we have $|\overline{f(B)C}|<2(\epsilon+\delta)$. \label{dwa}
\end{enumerate}
(by convention, we denote by $\overline{f(B)C}$ what should formally be $\overline{f(B)C'}$, where
$C'$ is $C$ trimmed by $r_{\!f}-1$ terminal symbols, to match the length of $f(B)$). 
In words, we require that all images of $B$ under the codes from $\mathcal F$ have small correlations with every block of $y$ of length $N_{\!k}$, ending before the position $m^2N_{\!k}$. 

\medskip
We can identify the family of all concatenations of $m$ blocks from $\G_{k-1}$ with the product space $(\G_{k-1})^m$. Notice that if $\G_{k-1}$ is equipped with the normalized counting measure, then the product measure on $(\G_{k-1})^m$ coincides with the normalized counting measure. Similarly, this measure conditioned on $\G_k$ is the normalized counting measure. In order to estimate (from below) the cardinality of $\G_k$, we need to estimate the probability $\gamma_k$ that a block $B\in(\G_{k-1})^m$ satisfies (R). Then
$$
\#\G_k = (\#\G_{k-1})^m\gamma_k.
$$
Recursive application of the above dependence (in which we replace the varying parameter $m$ by $\tfrac{N_k}{N_{k-1}}$) yields
$$
\#\G_k = N^{N_{\!k}}\cdot\gamma_1^{\frac{N_{\!k}}{N_1}}\cdot\gamma_2^{\frac{N_{\!k}}{N_2}}\cdots
\gamma_{k-1}^{\frac{N_{\!k}}{N_{\!k-1}}}\cdot\gamma_k^{\frac{N_{\!k}}{N_{\!k}}}.
$$
This, and the convergence of entropies, allows us to write the entropy of $\Sigma$ as
$$
h(\Sigma)= \lim_k\frac1{N_{\!k}}\log(\#\G_k)= \log N + \sum_{k=1}^\infty\frac{\log(\gamma_k)}{N_{\!k}}. 
$$

If we arrange (which we will) that all $\gamma_k$ are larger than or equal to $\frac12$, then we shall have
$$
h(\Sigma)\ge\log N -\log 2 \sum_{k=1}^\infty\frac1{M^k}= \log N-\frac1{M-1}\log2. 
$$
So, solely by the choice of the initial multiplier $M$, we will be able make $h(\Sigma)$ as close to $\log N$ as we wish.
\medskip

Now we specify the assignment $k\mapsto m$. For each $m\ge M$ let the \emph{jump step $K_m$} be defined as the first index $k$ such that $m_k=m$. Clearly, $K_{\!M}=1$, so there is no choice, but for $m>M$ we are free to choose the jump steps arbitrarily large. We choose them so that they satisfy two requirements:
\begin{enumerate}
\item[(a)] In step $K_m$ the reference index $p$ will be increased from $m-1-M$ to $m-M$. Recall that $N_{\!p}$ is the length of the blocks built in step $p$. This parameter will not change regardless of how we choose $K_m$. We require that the ratio $\frac{N_{\!K_m}}{N_{p}}$ is at least as large as the maximum of the parameters	$L(\epsilon,m^2)$ evaluated for $y$ along all arithmetic progressions of the form $(iN_{\!p}+l)_{i\ge 1}$ with $0\le l<N_{\!p}$. Recall that $y$ has zero average along every arithmetic progression, so Lemma \ref{proportion} applies. 
\item[(b)] Let $\alpha(m)=m^4\cdot2\cdot 4^m(2N_{\!p})^{\frac32}$. We require $K_m$ to be so large that\hfill\break $9\cdot\alpha(m)\cdot(\frac89)^{K_m-1}<\frac1{2^{m+2}}$.
\end{enumerate}
This concludes the construction of the subshift. Now we need to prove its properties. 
\medskip

In step $k$ let us fix a block $C$ and a code $f$, as they appear in the condition (R).
On the probability space $(\G_{k-1})^m$ let us denote by $\mathsf X$ the random variable
$B\mapsto\overline{f(B)C}$. 

\begin{lem} With the above notation, for every $k$, $f$ and $C$, we have:
\begin{enumerate}
  \item[(A)] $\sum_{s=p+1}^{k-1}(1-\gamma_s)<\frac{\delta}2$.
  \item[(B)] $|E\mathsf X|<\epsilon+2\delta$,
	\item[(C)] $\gamma_k>1-\alpha(m)(\tfrac89)^{k-1}$, (which, by {\rm(b)} and since $k\ge K_m$, is much larger than $\frac12$).
\end{enumerate}
\end{lem}

\begin{proof}

In steps $1,2,\dots K_{M+1}-1$, the multiplier equals $M$, the reference index is $0$, hence $\epsilon=1$ implying that $\gamma_k=1$ and all three conditions hold trivially.

Fix any $k$ such that the corresponding multiplier $m$ is larger than $M$. We have $k\ge K_m$. 
Suppose we have proved the lemma for all indices smaller than $k$. Then~(A) holds. Indeed, using~(C) for indices smaller than $k$, and (b), we can compute as follows:
\begin{multline*}
\sum_{s=p+1}^{k-1}(1-\gamma_s)\le \sum_{n\ge m_{p+1}}\sum_{s=K_n}^{K_{n+1}} \alpha(n)(\tfrac89)^{s-1}\le\\
\sum_{n\ge m_{p+1}} \alpha(n)\cdot9\cdot(\tfrac89)^{K_{n}-1}\le\sum_{n\ge m_{p+1}}\frac1{2^{n+2}}
=\frac1{2^{m_{p+1}+1}}=\frac{\delta}2.
\end{multline*}

We pass to proving~(B). We have the following descending sets:
$$
(\G_p)^{\frac{N_k}{N_p}}\supset (\G_{p+1})^{\frac{N_k}{N_{p+1}}}\supset\cdots\supset (\G_{k-2})^{\frac{N_k}{N_{k-2}}}\supset (\G_{k-1})^m,
$$
each containing blocks of length $N_k$ concatenated of blocks from some previous step $s$ of the construction, with $s$ ranging from the reference index $p$ to the index $k-1$ of the preceding step.
We will denote $\G^{(s)} = (\G_s)^{\frac{N_k}{N_s}}$ treated as a probability space with the normalized counting measure. Any block $B$ from any of these spaces decomposes as a concatenation $Q_1Q_2\dots Q_q$ of blocks from $\G_p$, with $q=\frac{N_k}{N_p}$. Fix a code $f\in\mathcal F$ (we have $r_{\!f}-1<\!\!\!<N_{\!p}$) and we fix a block $C$ of length $N_k$ appearing in $y$, ending before the position $m^2N_k$ (only such blocks appear in (R)). This block can be represented as the concatenation
$$
C=U_1I_1U_2I_2\dots U_qI_q,
$$
where each $U_i$ has length $N_{\!p}-r_{\!f}+1$ and each $I_i$ has length $r_{\!f}-1$. 
On each of the spaces $\G^{(s)}$ we define the \emph{$p$-approximate correlation function}
\begin{equation}\label{decompo}
B\mapsto\frac1q\sum_{i=1}^q\overline{f(Q_i)U_i}=:\,^{^p\!}\overline{f(B)C}.
\end{equation}
It is obvious that $\,^{^p\!}\overline{f(B)C}$ differs from $\overline{f(B)C}$ by at most $\frac{r_{\!f}-1}{N_{\!p}}$ (less than $\delta$), because this is the contribution of $\{-1,1\}$-valued terms in the evaluation of $\overline{f(B)C}$ not included in the evaluation of $\,^{^p\!}\overline{f(B)C}$. We have 
$$
\overline{f(Q_i)U_i} = \frac1{N_{\!p}-r_{\!f}+1}\sum_{l=1}^{N_{\!p}-r_{\!f}+1}(f(Q_i))_l(U_i)_l,
$$
where $(f(Q_i))_l$ and $(U_i)_l$ are single symbols in $f(Q_i)$ and $U_i$, respectively. Changing the order of summation, we get
$$
\,^{^p\!}\overline{f(B)C}= \frac1{N_{\!p}-r_{\!f}+1}\sum_{l=1}^{N_{\!p}-r_{\!f}+1}\frac1q\sum_{i=1}^q(f(Q_i))_l(U_i)_l.
$$
In evaluating the expected value, which will be denoted by $E^{(s)}(\,^{^p\!}\overline{f(\cdot)C})$, over any of the above spaces $\G^{(s)}$, the terms $(U_i)_l$ are constant, so we can write
$$
E^{(s)}(\,^{^p\!}\overline{f(\cdot)C})=
\frac1{N_{\!p}-r_{\!f}+1}\sum_{l=1}^{N_{\!p}-r_{\!f}+1}\frac1q\sum_{i=1}^q(U_i)_l\,E^{(s)}\!(f_{i,l}),
$$
where $f_{i,l}$ is the $\{-1,1\}$-valued variable $B\mapsto(f(Q_i))_l$ selecting one symbol of $f(B)$
(precisely, the symbol at the position $iN_{\!p}+l$). 

Now, on the largest space $\G^{(p)}$, i.e., on all blocks of length $N_k$ which are concatenations of blocks from $\G_p$, the variables $f_{i,l}$ with a common index $l$ have the same distribution for all indices $i$, and hence a common expected value denoted $E^{(p)}_l$ (note that $|E^{(p)}_l|\le 1$). Then
$$
E^{(p)}(\,^{^p\!}\overline{f(\cdot)C})= \frac1{N_{\!p}-r_{\!f}+1}\sum_{l=1}^{N_{\!p}-r_{\!f}+1}E^{(p)}_l\,\frac1q\sum_{i=1}^q(U_i)_l.
$$
The last average is the average of $y$ along an arithmetic progression with step $N_{\!p}$ and consisting of $q=\frac{N_k}{N_{\!p}}$ terms, contained in the first $m^2\frac{N_k}{N_{\!p}}$ terms. Since $k\ge K_m$,
and hence $\frac{N_k}{N_{\!p}}\ge\frac{N_{\!K_m}}{N_{\!p}}\ge L(\epsilon,m^2)$, the condition (a) implies that the last average (for every $l$) is less than $\epsilon$, and thus so is the double average.

Having proved that $E^{(p)}(\,^{^p\!}\overline{f(\cdot)C})<\epsilon$, we need to control how this expected value changes as we pass to smaller spaces $\G^{(s)}$, till we reach $\G^{(k-1)}$. Decomposing each $B\in\G^{(s-1)}$ into $\frac{N_k}{N_{\!s}}$ subblocks $B_i\in\G_s$ and also decomposing $C$ into corresponding subblocks $C_i$ of length $N_{\!s}$, we can write
$$
\,^{^p\!}\overline{f(B)C}=\frac{N_{\!s}}{N_k}\sum_{i=1}^{\frac{N_k}{N_{\!s}}}\,^{^p\!}\overline{f(B_i)C_i}.
$$
Since for $\tau\in\{s-1, s\}$ the blocks $B_i$ range over a set $\G_{(\tau)}$ independent of $i$ (more specifically, $\G_{(s-1)}=(\G_{s-1})^{\frac{N_{\!s}}{N_{\!s-1}}}$ and  $\G_{(s)}=\G_s$), we have
$$
E^{(\tau)}(\,^{^p\!}\overline{f(\cdot)C})=\frac{N_{\!s}}{N_k}\sum_{i=1}^{\frac{N_k}{N_{\!s}}}E_{(\tau)}(\,^{^p\!}\overline{f(\cdot)C_i}),
$$
where the latter expectation is over $\G_{(\tau)}$ (the dot represents the varying block $B_i$). Now, in the passage from $\tau=s-1$ to $\tau=s$ we must renormalize the measure from $(\G_{s-1})^{\frac{N_{\!s}}{N_{\!s-1}}}$ to $\G_s$. The expected value of any function with values in $[-1,1]$ may change by at most $2(1-\gamma_{\!s})$.\footnote{Here is the abbreviated derivation for $A\subset\Omega$ of measure $\gamma$. The function of modulus at most~1 and the probability measure are omitted: $|\int_\Omega-\frac1\gamma\int_A| = |(1-\frac1\gamma)\int_A+\int_{A^c}|\le |\frac{\gamma-1}\gamma\gamma|+(1-\gamma)$.} After averaging over $i$ we get
$$
|E^{s}(\,^{^p\!}\overline{f(\cdot)C}) - E^{(s-1)}(\,^{^p\!}\overline{f(\cdot)C})|< 2(1-\gamma_{\!s}).
$$
Composing over $s=p+1,\dots,k-1$ and using~(A), we arrive at 
\begin{equation}\label{citu}
|E^{(k-1)}(^{^p\!}\overline{f(\cdot)C})|< \epsilon+2\cdot\!\!\!\sum_{s=p+1}^{k-1}(1-\gamma_{\!s})<\epsilon+\delta.
\end{equation}
Recall that $\mathsf X$ appearing in the condition~(B) equals $\overline{f(B)C}$ restricted to $\G^{(k-1)}$, and that $\overline{f(B)C}$ differs from $^{^p\!}\overline{f(B)C}$ by less than $\delta$. Thus we can conclude the proof of~(B):
$$
|E\mathsf X| =|E^{(k-1)}(\overline{f(\cdot)C})|<\epsilon+2\delta.
$$
\medskip

We pass to the proof of~(C). We will need once again to refer to the $p$-approximate correlations $\,^{^p\!}\overline{f(B)C}$, but now on the spaces $\G_s$ ($s=p,\dots,k-1$), (the blocks $B$ will now have lengths $N_{\!s}$) and on the space $(\G_{k-1})^m$ (on which we have already been working). The code $f$ remains fixed, and $C$ is any block of the appropriate length ($N_s$ or $N_k$) appearing in $y$, ending before the position $m^2N_k$.

Suppose $\mathbf v_{\!s-1}$ is a bound on the variance of all $p$-approximate correlations on $\G_{s-1}$. Then the variance on the space $(\G_{s-1})^{\frac{N_{\!s}}{N_{\!s-1}}}$ is at most $\frac{N_{\!s-1}}{N_{\!s}}\mathbf v_{\!s-1}$, because we are averaging independent random variables. Now, $\G_s$ is a subset of the above product space, where it has measure $\gamma_s$, which, by the inductive assumption,  is larger than $\frac12$. Conditioning on such a subset can enlarge the variance at most 4 times.\footnote{To see this, write the variance as $\int\!\!\!\int\frac12(x-y)^2\,dxdy$ where $dx$ and $dy$ stand for the distribution on $\R$ of the random variable. The set on which we condition in $\R^2$ has measure larger than~$\frac14$.} Thus we obtain the estimate
$$
\mathbf v_{\!s}\le4\tfrac{N_{\!s-1}}{N_{\!s}}\mathbf v_{\!s-1}.
$$
By recursive application of the above, we can estimate $\mathbf v_{k-1}$ referring to the step $p$ and safely estimating the variances on $\G_p$ by $\mathbf v_{\!p}=2$ (our variables take values in $[-1,1]$):
$$
\mathbf v_{\!k-1}\le 4^{k-1-p}\tfrac{N_{\!p}}{N_{\!k-1}}\cdot 2\le 2N_{\!p}\tfrac{4^{k-1}}{N_{\!k-1}}.
$$
\smallskip


Denote by $\overline{\mathsf X}$ (to match the notation in \eqref{Hoeffding}), the $p$-approximate correlation regarded on the product space $(\G_{k-1})^m$ (on which it is indeed the average of $m$ independent random variables). Now \eqref{Hoeffding} applies and reads:
$$
\P\{|\overline{\mathsf X}-E\overline{\mathsf X}|\geq \epsilon\}\le 2\cdot4^m{\mathbf v_{k-1}}^{\frac\epsilon2m}\le 2\cdot4^m(2N_{\!p})^{\frac32}(\tfrac{4^{k-1}}{N_{\!k-1}})^{\frac32}
$$
(we have also used the equality $\epsilon=\frac3m$).
\medskip
The expectation $E\overline{\mathsf X}$ coincides with what was previously denoted by $E^{(k-1)}(^{^p\!}\overline{f(\cdot)C})$, so, by the already proved inequality \eqref{citu}, we have
$|E\overline{\mathsf X}|<\epsilon+\delta$. We can thus continue:
$$
\P\{|\overline{\mathsf X}-E\overline{\mathsf X}|\geq \epsilon\}\ge\P\{|\overline{\mathsf X}|\ge2\epsilon+\delta\}.
$$
Recall that $\overline{\mathsf X}$ differs from the corresponding correlation function $B\mapsto\overline{f(B)C}$ by less than $\delta$. This implies that
$$
\P\{|\overline{f(B)C}|\geq 2(\epsilon+\delta)\}\ \le\ 
2\cdot4^m(2N_{\!p})^{\frac32}(\tfrac{4^{k-1}}{N_{\!k-1}})^{\frac32}.
$$
The above concerns the probability on $(\G_{k-1})^m$, a fixed block $C$ in $y$ and a fixed code $f\in\mathcal F$. The condition (R) requires the inequality $|\overline{f(B)C}|\ge2(\epsilon+\delta)$ to be satisfied for $(m^2-1)N_{\!k}\le m^3N_{\!k-1}$ blocks $C$ and all codes $f\in\mathcal F$. Since $\#\mathcal F\le m$, the overall probability $1-\gamma_k$ of a block $B$ failing (R) is estimated by 
$$
m^4N_{\!k-1}\cdot2\cdot4^m(2N_{\!p})^{\frac32}(\tfrac{4^{k-1}}{N_{\!k-1}})^{\frac32}=
\alpha(m)\tfrac{8^{k-1}}{\sqrt{N_{\!k-1}}}\le\alpha(m)\bigr(\tfrac8{\sqrt M}\bigr)^{k-1}\le
\alpha(m)\bigr(\tfrac89\bigr)^{k-1}
$$
(recall that $M\ge81$). This ends the proof of~(C) and thus of the lemma.
\end{proof}

Since we have proved, in particular, that all $\gamma_k$ are larger than $\frac12$, it now becomes certain that the entropy of $\Sigma$ can be made arbitrarily close to $\log N$. It remains to prove lack of correlation between $\Sigma$ and $y$. Let $f$ be any $\{-1,1\}$-valued function depending on finitely many nonnegative coordinates. Fix some point $x\in\Sigma$ and pick $n\in\N$. Let $k$ be the smallest integer such that $n< m^2N_k$. If $f$ is not in $\mathcal F=\mathcal F_k$ then we simply must pick a larger $n$. So, we can assume that $f\in\mathcal F$. Now, $x\in\Sigma_k$, which means that $x_1^n$ is a concatenation of the blocks from $\G_k$, except that the first and last component blocks may be incomplete. The contribution of these parts in the length is at most $\frac{2N_k}n$, and since $n\ge m_{k-1}^2N_{k-1}\ge (m-1)^2N_{k-1}>(m-2)N_k$, this contribution is less than $\frac2{m-2}$, and such is also the maximal contribution of these parts in the evaluation of the correlation between $x_1^n$ and $y_1^n$. The rest of the correlation is the average of the correlations of the complete component blocks from $\G_k$ with their respective subblocks of length $N_k$ of $y$. Since all these subblocks end before the position $m^2N_k$, by (R), each of these correlations is less than $2(\epsilon+\delta)$ in absolute value. Jointly, the absolute value of the correlation of $x_1^n$ with $y_1^n$ does not exceed 
$$
\tfrac 2{m-2}\cdot1 + \tfrac{m-4}{m-2}\cdot2(\epsilon+\delta).
$$
Obviously, as $n$ grows, so does $k$, and so does $m$, while both $\epsilon$ and $\delta$ tend to zero. This proves the desired uncorrelation condition concluding the entire proof of the main result.
\end{proof}

\end{document}